\newcommand{\kap}{\kappa}
\title{\textbf{Scaling laws and exact results in deterministic flows}}
\author{{\textsc{matthew novack}}}
\date{}
\begin{document}

\maketitle

\begin{abstract}
In this note, we address the validity of certain exact results from turbulence theory in the deterministic setting.  The main tools, inspired by the work of Duchon-Robert~\cite{DuchonRobert} and Eyink~\cite{Eyink}, are a number of energy balance identities for weak solutions of the incompressible Euler and Navier-Stokes equations. As a consequence, we show that certain weak solutions of the Euler and Navier-Stokes equations satisfy deterministic versions of Kolmogorov's $\sfrac 45$, $\sfrac 43$, $\sfrac{4}{15}$ laws.  We apply these computations to improve a recent result of Hofmanova et al.~\cite{HPZZ}, which shows that a construction of solutions of forced Navier-Stokes due to Bru\`e et al.~\cite{BCCDLS} and exhibiting a form of anomalous dissipation satisfies asymptotic versions of Kolmogorov's laws.  In addition, we show that the globally dissipative 3D Euler flows recently constructed by Giri, Kwon, and the author~\cite{GKN23} satisfy the local versions of Kolmogorov's laws.
\end{abstract}

\newcommand{\AAA}{\mathcal{A}}
\newcommand{\dee}{\mathrm{d}}
\newcommand{\MMM}{\mathcal{M}}

\section{Introduction}
The purpose of this note is to determine whether some recent deterministic constructions of weak solutions to the Euler and Navier-Stokes equations satisfy appropriate versions of Kolmogorov's famous $\sfrac 45$, $\sfrac 43$, and $\sfrac{4}{15}$ laws.  There have been a number of results over the past few decades addressing such questions, which we review below.  However, we were unable to find precise statements in the literature which provided satisfactory answers to our motivating questions, the first of which is the following: when do weak solutions of the 3D incompressible Euler equations satisfy the distributional equality
\begin{align}
    \pa_t &\left( \frac{|u|^2(t,x)}{2} \right) + \div \left( u(t,x) \left( \frac{|u|^2(t,x)}{2} + p(t,x) \right)  \right) \notag\\
    &\qquad = \lim_{\ell\rightarrow 0} \frac{3}{4\ell} \dashint_{\S^2} y_j \left( u^j(t,x+y) - u^j(t,x) \right) |u(t,x+y) - u(t,x)|^2 \, \dee y \, ?  \label{local43}
\end{align}
Following Duchon and Robert~\cite{DuchonRobert} and Eyink~\cite{Eyink}, we refer to~\eqref{local43} as a ``deterministic, local $\sfrac 43$ law.''  It is well-known that the left-hand side is equal to the Duchon-Robert distribution~\cite{DuchonRobert}
$$ D[u](t,x) = \lim_{\ell\rightarrow 0} D_\ell[u](t,x) = \lim_{\ell\rightarrow 0} \frac 14 \int_{\T^3} \pa_j \varphi_\ell(y) \left( u^j(t,x+y) - u^j(t,x) \right) |u(t,x+y) - u(t,x)|^2 \, \dee y \, , $$
where $\varphi_\ell$ is a mollification kernel at scale $\ell$. Concerning~\eqref{local43}, however, both~\cite{DuchonRobert, Eyink} assert that it holds \emph{assuming} the distributional limit of the right-hand side exists as $\ell\rightarrow 0$. As it turns out, only the mild assumption $u\in L^3_{t,x}$ is needed in order to define the Duchon-Robert distribution for Euler weak solutions, and we shall show in Theorem~\ref{thm:main} that the same assumption suffices to ensure that~\eqref{local43} holds, with similar conclusions in the cases of the $\sfrac 45$ and $\sfrac{4}{15}$ laws.  The proof of this fact follows the general strategy of Duchon-Robert/Eyink; our contribution is to remove the conditionality on the existence of the limit. We do so by adding an intermediate step in the proof, in which we choose a sequence of mollification kernels $\{\varphi_{\ell,\gamma}\}_{\gamma > 0}$, with gradients supported in a neighborhood of size $\ell\gamma$ around the sphere $\S^{d-1}_\ell$ of radius $\ell$, and pass to the limit $\gamma\rightarrow 0$.  Upon doing so, we obtain an energy balance for $\ell>0$ which allows us to pass to the limit $\ell\rightarrow 0$ and obtain precisely~\eqref{local43}. Our interest in this question stems from recent joint work with Giri and Kwon~\cite{GKN23b, GKN23}, in which we construct energy-dissipating weak solutions to the 3D Euler equations with $D[u] \geq 0$ and $u\in C^0_t B^{\sfrac 13-}_{3,\infty}(\T^3)$.  Conversely, it is easy to check that if an Euler weak solution $u\in L^3_t B_{3,\infty}^+(\T^d)$, then $D[u]=0$, and thus $u$ conserves energy.  We refer also to work of Eyink~\cite{Eyink94}, Constantin, E, and Titi~\cite{CET}, and Cheskidov, Constantin, Friedlander, and Shvydkoy~\cite{CCFS} for different proofs of conservation of energy under various conditions.

The second motivation for this work lies in recent results of Bru\`e, Colombo, Crippa, De Lellis, and Sorella~\cite{BCCDLS} and Hofmanova, Pappalettera, Zhu, and Zhu~\cite{HPZZ}. The former constructs a sequence of Leray weak solutions $\{u_\nu\}$ to the forced\footnote{The smooth forcings $f_\nu$ depend on $\nu$ and lose some regularity in the limit $\nu\rightarrow 0$, although they remain bounded in $L^{1+}_t C^{0+}_x$; this bound rules out the possibility of anomalous dissipation for the heat equation~\cite{BCCDLS}.} 3D Navier-Stokes equations, uniformly bounded in $L^3_t C^{\sfrac 13-}_x$, for which
\begin{equation}\label{ad}
  \varepsilon = \lim_{\nu\rightarrow 0}{\varepsilon_\nu} = \lim_{\nu\rightarrow 0} \nu \int_0^1 \int_{\T^3} \left| \nabla u_\nu \right|^2 > 0 \, . 
\end{equation}
The latter shows that this sequence of solutions satisfies the asymptotic relation
\begin{align}
\lim_{\ell_I \rightarrow 0} \limsup_{\nu\rightarrow 0} \sup_{\ell\in[\ell_D, \ell_I]} \int_0^1 \, \bigg{[} &\int_0^t \frac{1}{\ell} \int_{\T^3} \dashint_{\S^2} y^j \left( u_\nu^j(r,x+\ell y) - u_\nu^j(r,x) \right) \notag\\
&\qquad \qquad |u_\nu(r,x+\ell y) - u_\nu(r,x)|^2 \, \dee y  \, \dee r + \frac{4}{3} \varepsilon_\nu(t) \bigg{]}^p \,   \dee t = 0 \, , \label{43?}
\end{align}
where $p\in[1,\infty)$, $\ell_D = \nu^{\sfrac 12-}$, and $\varepsilon_\nu(t)$ replaces $1$ with $t$ in~\eqref{ad}. The inspiration for these works is Kolmogorov's famous 1941 phenomenological theory of turbulence \cite{K2, K3, K1}, framed in the context of weak solutions of the 3D Euler or Navier-Stokes equations
\begin{equation}\label{eqn:NSE:Euler}
    \begin{cases}
    \pa_t u + (u\cdot \na) u + \na p = \nu \Delta u + f \\
    \div \, u =0 \\
    u|_{t=0} = u_0  \, .
    \end{cases}
\end{equation}
Kolmogorov's theory assumes a non-vanishing energy dissipation rate $\varepsilon$ as in~\eqref{ad}, as well as homogeneity, isotropy, and self-similarity of velocity increments~\cite{Frisch}.  Under these assumptions, Kolmogorov predicts a scaling relation similar to~\eqref{43?}, \emph{which however should hold for  $\ell\in [\nu^{\sfrac 34}, \ell_I]$}. In other words, the dissipative length scale $\ell_D$ used in~\cite{HPZZ} is not the one predicted by Kolmogorov's theory.  Therefore, we revisit this question and prove in Corollary~\ref{local:limit} that in fact~\eqref{43?} holds for $\ell_D=\nu^{\sfrac 34-}$ for the solutions constructed in~\cite{BCCDLS}.  More generally, we show that the choice of dissipative length scale $\ell_D$ can be made using knowledge of the \textit{second order structure function exponent}, or in this setting, the number $\zeta_2$ such that the sequence $u_\nu$ enjoys uniform bounds in $L^2_t B^{\sfrac{\zeta_2}{2}}_{2,\infty,x}$. Our proof is a straightforward application of the energy balance identities we derive in the course of proving Theorem~\ref{thm:main} and is similar in spirit to~\cite{HPZZ} (which itself relies on earlier work of Bedrossian, Coti Zelati, Punshon-Smith, and Weber in the stochastic setting~\cite{BCZ} and is essentially an application of the Kolmogorov-K\'arm\'an-Howarth/K\'arm\'an-Howarth-Monin identity).  We however make proper use of the uniform $L^3_t C^{\sfrac 13-}_x$ regularity of the example from~\cite{BCCDLS} in order to obtain the correct dissipative length scale. We can similarly treat the versions of~\eqref{43?} for the $\sfrac{4}{15}$ and $\sfrac 45$ laws (the former of which is not mentioned in~\cite{BCZ, HPZZ}, although it may be treated using the same ideas).

In order to state our main theorems, we first provide a few definitions.  We define weak solutions to~\eqref{eqn:NSE:Euler} according to the integral equality
\begin{align}
    &\int_{\T^d} \left[ \phi^i(0,x) u_0^i(x) - \phi^i(T,x) u^i(T,x) \right] \, \dee x + \int_{\T^d\times[0,T]} \phi^i(t,x) f^i(t,x) \, \dee t \, \dee x \notag\\
    &= -\int_{\T^d\times[0,T]} \left[ (\pa_t \phi^i u^i)(t,x) - ( \pa_j \phi^i u^i u^j )(t,x) - (\pa_i \phi^i p)(t,x) + \nu (\pa_j \phi^i \pa_j u^i)(t,x) \right] \, \dee t \, \dee x \label{weak:formulation}
\end{align}
for all $\phi^i \in C^\infty\left([0,T]\times \T^d\right)$. If $\nu=0$, we only require $u\in C^0 \left([0,T];L^2(\T^d)\right)$, while if $\nu>0$, we additionally require $u \in L^2\left([0,T]; \dot H^1(\T^d)\right)$.  We define the symmetric tensors
\begin{equation}\label{eq:tensors}
    T^{ij}_I(y) : = \delta^{ij} \, , \qquad T^{ij}_L(y) : = \frac{y^iy^j}{|y|^2} \, , \qquad T^{ij}_T(y) :=  \left( \delta^{ij} - \frac{y^i y^j}{|y|^2} \right) \, . 
\end{equation}
Our first theorem gives three energy balance identities where the dissipation measures are defined using integrals over spheres, making rigorous the claims regarding these measures in~\cite{DuchonRobert, Eyink} for the 3-dimensional case; we refer also to Eyink's notes~\cite{EyinkNotes} for the case of general dimensions $d\geq 2$.

\begin{theorem}[\textbf{Energy balance identities}]\label{thm:main}
    Let $d \geq 2$, and let $u(t,x):[0,T]\times \T^d\rightarrow \R^d$ be a weak solution of the $d$-dimensional Euler or Navier-Stokes equations with $u\in L^3\left([0,T]\times \T^d\right) \cap C^0 \left( [0,T];L^2(\T^d)\right)$; if $\nu \neq 0$ we additionally require $u\in L^2\left([0,T];\dot H^1(\T^d) \right)$.  Assume also that $f \in L^{\sfrac 32}\left([0,T]\times\T^d\right)\cup L^1\left([0,T];L^2(\T^d) \right)$ and $u_0 \in L^2(\T^d)$.  Then the following balance laws hold in the sense of distributions:
    \begin{align}
        \pa_t \left(|u|^2 \right) &+ \partial_j \left( u^j |u|^2 + 2 p u^j \right) - 2 u^i f^i - \nu \left( \Delta(|u|^2) - 2\pa_k u^i \pa_k u^i \right) = -2 D_{\bullet}[u] \, , \label{eq:main:balance:thm}
    \end{align}
    where the distributions $D_{\bullet}[u](t,x)$ are defined for $\bullet=I,L,T$ by the formulas
    \begin{subequations}\label{formulas}
    \begin{align}
       -\frac 4 d \cdot D_{I}[u] &= \lim_{\ell\rightarrow 0} \frac{1}{\ell} \dashint_{\S^{d-1}}  y^j \left( u^j(t,x+\ell y) - u^j(t,x) \right) \left| T_I(y) \left( u(t,x+\ell y) - u(t,x) \right) \right|^2 \, \dee y \, , \\ 
       -\frac{12}{d(d+2)} \cdot D_{L}[u] &= \lim_{\ell\rightarrow 0} {\frac{1}{\ell}} \dashint_{\S^{d-1}} y^j \left( u^j(t,x+\ell y) - u^j (t,x) \right) \left| T_L(y) \left( u(t,x+\ell y) - u(t,x) \right) \right|^2 \, \dee y \, , \\
       -\frac{4(d-1)}{d(d+2)} \cdot D_{T}[u] &= \lim_{\ell\rightarrow 0} \frac 1 \ell \dashint_{\S^{d-1}} y^j \left( u^j(t,x+\ell y) - u^j (t,x) \right) \left| T_T(y) \left( u(t,x+\ell y) - u(t,x) \right) \right|^2 \, \dee y \, . \label{4/15}
    \end{align}
    \end{subequations}
\end{theorem}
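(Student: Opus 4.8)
The plan is to establish all three identities at once from a single regularized energy balance, treating the full-trace case ($\bullet=I$, the $\sfrac43$ law) as primary and deducing the longitudinal ($L$) and transverse ($T$) versions from it via incompressibility. Throughout I write $\delta u(\xi)=u(t,x+\xi)-u(t,x)$, suppressing $(t,x)$. First I would fix $\ell>0$ and follow Duchon--Robert/Eyink~\cite{DuchonRobert,Eyink}: mollify $u$ at scale $\ell$ by a radial kernel $\varphi_{\ell,\gamma}$, form the balance for $|u*\varphi_{\ell,\gamma}|^2$ from the weak formulation~\eqref{weak:formulation}, and rearrange the nonlinear contribution into divergence form plus the Duchon--Robert flux
$$ D_{\ell,\gamma}[u] = \frac14 \int_{\T^d} \na\varphi_{\ell,\gamma}(\xi)\cdot\delta u(\xi)\,|\delta u(\xi)|^2 \,\dee\xi \, , $$
an identity which is exact at each scale and requires only $u\in L^3$ to make sense. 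The viscous terms reproduce $\nu(\Delta(|u|^2)-2\pa_ku^i\pa_ku^i)$ and the forcing $2u^if^i$, whose mollification errors vanish under the hypotheses $u\in L^2_t\dot H^1$, $f\in L^{\sfrac32}\cup L^1_tL^2$; the factor $2$ (versus Duchon--Robert) comes from using $|u|^2$ in place of $\sfrac{|u|^2}{2}$.

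The new ingredient is to take $\varphi_{\ell,\gamma}$ with $\na\varphi_{\ell,\gamma}$ supported in the shell $\{\,|\xi|\in[\ell(1-\gamma),\ell(1+\gamma)]\,\}$, so that $\na\varphi_{\ell,\gamma}(\xi)=\varphi'_{\ell,\gamma}(|\xi|)\frac{\xi}{|\xi|}$ with $\|\na\varphi_{\ell,\gamma}\|_{L^1}\sim\ell^{-1}$ uniformly in $\gamma$. Normalizing $\int\varphi_{\ell,\gamma}=1$ forces $\varphi'_{\ell,\gamma}$ to concentrate as $\gamma\to0$ into $-\frac{1}{|B_1|\,\ell^d}$ times unit mass at radius $\ell$, so passing to polar coordinates gives
$$ \lim_{\gamma\to0} D_{\ell,\gamma}[u] = -\frac{d}{4\ell}\dashint_{\S^{d-1}} y^j\,\delta u^j(\ell y)\,|\delta u(\ell y)|^2 \,\dee y \, , $$
where the pairing against a test function in $x$ is well defined because the integrand lies in $L^1_x$ by H\"older. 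The uniform $L^1$ bound lets me take $\gamma\to0$ inside every term, producing an \emph{exact} energy balance at scale $\ell$ whose flux is the sphere average of~\eqref{formulas}. It is here that I can drop the conditionality of~\cite{DuchonRobert,Eyink}: since the scale-$\ell$ balance is an identity, letting $\ell\to0$ forces the distributional left-hand side to converge (by $u\in C^0_tL^2\cap L^3_{t,x}$, exactly as in Duchon--Robert) and hence \emph{compels} the sphere average to converge, which is~\eqref{eq:main:balance:thm} with $\bullet=I$ and $D_I[u]=D[u]$.

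For the longitudinal and transverse laws I would use the orthogonal splitting $|\delta u|^2=|T_L(y)\delta u|^2+|T_T(y)\delta u|^2$, which decomposes the $I$-integrand into the $L$- and $T$-integrands and already gives the consistency $-\frac4d D=-\frac{12}{d(d+2)}D-\frac{4(d-1)}{d(d+2)}D$. To pin down the individual constants I would rewrite each sphere average as a ball integral through the divergence theorem: with $W^k(\xi)=\delta u^k(\xi)\,|T_\bullet(\xi)\delta u(\xi)|^2$ one has
$$ \frac1\ell\dashint_{\S^{d-1}} y^j W^j(\ell y)\,\dee y = \frac1d\dashint_{B_\ell}\div_\xi W\,\dee\xi \, , $$
and incompressibility $\pa_{\xi^i}\delta u^i=0$ collapses $\div_\xi W$ to a local cubic expression in $\delta u$. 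The constants then emerge from the angular moments $\dashint_{\S^{d-1}} y^iy^j\,\dee y=\frac1d\delta^{ij}$ and
$$ \dashint_{\S^{d-1}} y^iy^jy^ky^l\,\dee y = \frac{1}{d(d+2)}\left(\delta^{ij}\delta^{kl}+\delta^{ik}\delta^{jl}+\delta^{il}\delta^{jk}\right) \, , $$
together with the kinematic relations that incompressibility imposes between longitudinal and full third-order structure functions. Because each of the three resulting balance laws has the identical left-hand side of~\eqref{eq:main:balance:thm}, they must share the same right-hand side, giving $D_I[u]=D_L[u]=D_T[u]$ as distributions.

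The main obstacle I anticipate is making the intermediate limit $\gamma\to0$ genuinely rigorous with only $u\in L^3$: one must verify that the concentration of $\na\varphi_{\ell,\gamma}$ onto the thin shell spoils neither the convergence of the flux nor that of the commutator errors in the scale-$\ell$ balance, and that the limiting sphere average is a bona fide distribution rather than an ill-defined trace on a null set. The uniform bound $\|\na\varphi_{\ell,\gamma}\|_{L^1}\sim\ell^{-1}$ and the $x$-integration, which averages the sphere over all base points, are what rescue this; but checking the interchange of $\gamma\to0$ with the pressure and viscous terms, and separately justifying the incompressibility identities behind the $L$ and $T$ constants at the level of non-smooth fields (via mollification and a commutator estimate), are the steps demanding the most care.
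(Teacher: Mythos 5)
Your treatment of $\bullet=I$ is essentially the paper's argument: derive an exact balance at scale $\ell$ by concentrating $\nabla\varphi_{\ell,\gamma}$ on a shell and sending $\gamma\to 0$, then send $\ell\to 0$ and let uniqueness of distributional limits force the sphere-averaged flux to converge to $D[u]$. Two small caveats there: a uniform bound $\|\nabla\varphi_{\ell,\gamma}\|_{L^1}\sim\ell^{-1}$ gives equiboundedness of the flux but not convergence as $\gamma\to0$ --- the paper's Proposition~\ref{spheres} supplies the actual limit by treating the radial concentration as an approximate identity acting on the (integrable, translation-continuous) spherical average --- and one must justify inserting $\phi u^i$ and $\phi u^i_{\ell,\gamma}$ as test functions (the paper's \texttt{Step 0}), which you implicitly defer to Duchon--Robert. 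Neither is a serious objection.

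The genuine gap is in how you propose to obtain the longitudinal and transverse constants. Writing $\frac1\ell\dashint_{\S^{d-1}} y^jW^j(\ell y)\,\dee y=\frac1d\dashint_{B_\ell}\div_\xi W\,\dee\xi$ with $W^k(\xi)=\delta u^k(\xi)|T_\bullet(\xi)\delta u(\xi)|^2$ is already problematic, since after incompressibility kills $(\div_\xi\delta u)|T_\bullet\delta u|^2$ the remainder $\delta u^k\partial_{\xi^k}|T_\bullet(\xi)\delta u(\xi)|^2$ still contains $\nabla u(t,x+\xi)$, which does not exist at $L^3$ regularity; and even formally, the angular moments $\dashint y^iy^j\,\dee y$ and $\dashint y^iy^jy^ky^l\,\dee y$ only evaluate averages of polynomials in $y$, whereas the integrand also depends on $y$ through $\delta u(\ell y)$. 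The ``kinematic relations between longitudinal and full third-order structure functions'' you invoke are consequences of statistical isotropy (K\'arm\'an--Howarth); the fields in the theorem are deterministic and not isotropic, and avoiding that hypothesis is the entire point. The orthogonal splitting $|\delta u|^2=|T_L\delta u|^2+|T_T\delta u|^2$ gives you exactly one linear relation among the three limits (the consistency check on the constants), so you still need a second, independently derived balance to pin down $D_L$ or $D_T$, and your proposal does not produce one --- which also makes the concluding ``identical left-hand sides, hence identical right-hand sides'' step circular. What the paper does instead is rerun the mollification argument with the tensor kernel $T_L^{ij}(y)\varphi_{\ell,\gamma}(y)$ (excising the singularity of $T_L$ at $y=0$ with a cutoff $c_{\kappa}$, and checking that the pressure term survives because $\partial_i(T^{ik}_L\varphi_{\ell,\gamma})$ is still a pure gradient $\partial_k\Psi_{\ell,\gamma}$). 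The resulting flux~\eqref{mess:one} is \emph{not} the pure longitudinal structure function: it carries an extra term $\frac{2y}{|y|^2}\varphi_{\ell,\gamma}\cdot\big(u(t,x+y)-u(t,x)\big)\,|T_T(u(t,x+y)-u(t,x))|^2$, which is cancelled by subtracting the $\bullet=T$ balance run with the special kernel $\ov\varphi_{\ell}(y)=|B_\ell(0)|^{-1}\left(\sfrac{|y|^2}{\ell^2}-1\right)\mathbf{1}_{B_\ell(0)}(y)$, chosen precisely so that $\nabla\ov\varphi_\ell-\frac{2y}{|y|^2}\ov\varphi_\ell$ reproduces the unwanted term. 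The constants $\frac{12}{d(d+2)}$ and $\frac{4(d-1)}{d(d+2)}$ then come from the total masses of the limiting kernels in~\eqref{last:one:L:L} (and additivity for $T$), not from angular moments of the structure functions. Without some substitute for this cancellation mechanism, your plan establishes only the $\sfrac 4d$ law.
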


\begin{remark}[\textbf{Numerology}] We note that in the case $d=3$, the formulas in~\eqref{formulas} give local, deterministic versions of the $\sfrac 43$, $\sfrac 45$, and $\sfrac{4}{15}$ laws.  The discrepancy between the numbers $\sfrac{4}{15}$ and $\sfrac{8}{15}$ in~\eqref{4/15} is due to the fact that Kolmogorov considered only $u(t,x+\ell\ov y)-u(t,x)$, where $\ov y$ was a particular choice of unit vector perpendicular to $y$; in $3$ dimensions, this constitutes half of the space perpendicular to $y$. 
\end{remark}

\begin{remark}[\textbf{Dissipation measures and energy flux}]
It is natural to interpret $D[u]$, or $D_\bullet[u]$, as the local energy flux of $u$. The solutions constructed by Giri, Kwon, and the author in~\cite{GKN23b, GKN23} then have local energy flux given by a non-negative $L^1_{t,x}$ function (in fact a smooth function).  We remark that in general, the distributional inequality $D[u]\geq 0$ ensures that $D[u]$ is actually a locally finite positive measure, thus justifying the common terminology ``Duchon-Robert measure.''  The laws in~\eqref{formulas} apply as well as to any of the constructions of (nearly) Onsager-critical solutions to 3D Euler due to Isett~\cite{Isett2018}, Buckmaster, De Lellis, Sz\'ekelyhidi, and Vicol~\cite{BDLSV17}, and the author and Vicol~\cite{NV22}, or Giri and Radu~\cite{GR23} for the 2D Euler equations. However, none of the latter set of examples mentioned above satisfy $D[u] \geq 0$, and so $D[u]$ is only a distribution, but not an $L^1_{t,x}$ function or a positive measure.  Furthermore, such solutions cannot arise as an inviscid limit of suitable solutions to the Navier-Stokes equations, which by definition satisfy $D[u^\nu] \geq 0$. For $C^\alpha$ constructions of Euler solutions satisfying $D[u]\geq 0$, where $\alpha$ is however bounded away from the Onsager threshold $\sfrac 13$, we refer to work of Isett~\cite{Is22} and De Lellis and Kwon~\cite{DK22}.
\end{remark}

\begin{remark}[\textbf{Dissipation measures and intermittency}]
    Incompressible fluids for which the turbulent region is not space-filling are referred to as ``intermittent,'' contrasting with the K41 prediction of a turbulent region with full measure.  Intermittent solutions may then belong to $H^\alpha_x$ for some $\alpha>\sfrac 13$, and $C^\beta_x$ for some $\beta<\sfrac 13$; such solutions to 3D Euler were first constructed by Buckmaster, Masmoudi, the author, and Vicol~\cite{BMNV}. There is substantial evidence indicating that physical flows are in fact intermittent and have space-time energy dissipation measure, given by either the formula of Duchon-Robert or any of the formulas in~\eqref{formulas}, concentrating on lower-dimensional sets~\cite{Frisch}. We refer to recent work of De Rosa and Isett~\cite{DeRosaIsett}, De Rosa, Drivas, and Inversi~\cite{DRDI}, Cheskidov and Shvydkoy~\cite{CS}, and references therein for mathematical examples and quantifications of this phenomenon.
\end{remark}

\begin{remark}[\textbf{The $\sfrac 45$ law and self-regularization}]
    We note that in~\cite{Drivas}, Drivas adopts a hypothesis on anti-alignment of velocity increments and \emph{assumes} that~\eqref{formulas} holds in order to prove that an $L^3_{t,x}$ Euler solution enjoys higher regularity.  Theorem~\ref{thm:main} shows that this latter assumption may be removed.
\end{remark}

In order to present our next corollary, we define
\begin{subequations}\label{structures}
    \begin{align}
        S^\nu_I(t,\ell) &= \frac{d}{4}\int_{\T^d}\dashint_{\S^{d-1}} \left( u_\nu(t,x+\ell y) - u_\nu(t,x) \right) \cdot y \left| u_\nu(t,x+\ell y) - u_\nu(t,x)  \right| \, \dee y \, \dee x \, , \\
        S^\nu_L(t,\ell) &= \frac{d(d+2)}{12}\int_{\T^d}\dashint_{\S^{d-1}} \left( u_\nu(t,x+\ell y) - u_\nu(t,x) \right) \cdot y \left| T_L\left(u_\nu(t,x+\ell y) - u_\nu(t,x) \right) \right|^2 \, \dee y \, \dee x \, ,\label{fortyfive} \\
        S^\nu_T(t,\ell) &= \frac{d(d+2)}{4(d-1)}\int_{\T^d}\dashint_{\S^{d-1}} \left( u_\nu(t,x+\ell y) - u_\nu(t,x) \right) \cdot y \left| T_T\left( u_\nu(t,x+\ell y) - u_\nu(t,x) \right)  \right|^2 \, \dee y \, \dee x \, , \label{8:15}
    \end{align}
\end{subequations}
where we have included the appropriate constants in order to streamline the following statement.

\begin{corollary}[\textbf{$\sfrac 45$, $\sfrac 43$, and $\sfrac{4}{15}$ laws in the inviscid limit $\nu\rightarrow 0$}]\label{local:limit}
    Let $u_\nu$, $\nu\in(0,1)$ be Leray-Hopf solutions of the $d$-dimensional forced Navier-Stokes system~\eqref{eqn:NSE:Euler} with a fixed initial datum $u_{0}\in L^2$ such that there exist $\alpha,\sigma>0$ satisfying
\begin{equation}\label{k41regularity}
        {\sup_{\nu \in (0,1)} \left[ \| u_\nu \|_{L^2_t B^\alpha_{2,\infty,x}} + \| f_\nu \|_{L^{1+\sigma}_t L^2_x} \right] < \infty  \, . } 
    \end{equation}
Set
\begin{equation}\label{epsy}
        \varepsilon_\nu(t) = \frac{1}{2} \|u_0\|_{L^2(\T^d)}^2 - \frac 12\|u_\nu(t)\|_{L^2(\T^d)}^2 + \int_0^t \langle f_\nu(r) , u_\nu(r) \rangle \, \dee r \, . 
    \end{equation}
    Then for 
    \begin{equation}\label{correct:lengthscale}
        {\ell_D(\nu) = \nu^{L} \, , \qquad \textnormal{where} \quad L < {\frac{1}{2(1-\alpha)}} } \, ,
    \end{equation}
    and any $p\in [1,\infty)$, we have that
\begin{align}\label{43:law}
        \lim_{\ell_I \rightarrow 0} \limsup_{\nu\rightarrow 0} \sup_{\ell\in[\ell_D, \ell_I]} \int_0^1 \, \left| \int_0^t \frac{S^\nu_\bullet(r,\ell)}{\ell} \, \dee r + \varepsilon_\nu(t) \right|^p \,   \dee t = 0 \, .
\end{align}   
If in addition there exists $\ov \alpha>0$ such that $u^\nu$ is uniformly bounded in $L^\infty_t B^{\ov\alpha}_{2,\infty,x}$, in which case we allow $\sigma=0$ so that $f_\nu$ is bounded in $L^1_t L^2_x$, then 
    \begin{align}
        \lim_{\ell_I \rightarrow 0} \limsup_{\nu\rightarrow 0} \sup_{\ell\in[\ell_D, \ell_I]} \sup_{t\in[0,1]} \left| \int_0^t \frac{S^\nu_\bullet(r,\ell)}{\ell} \, \dee r + \varepsilon_\nu(t) \right| = 0 \, . 
    \end{align}
\end{corollary}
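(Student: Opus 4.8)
The plan is to derive, for each fixed $\nu>0$ and each $\ell>0$, an exact \emph{pre-limit} energy balance of the type produced in the course of proving Theorem~\ref{thm:main}, and then to estimate its two error terms using the regularity~\eqref{k41regularity}. Writing $u_\nu^\ell = u_\nu*\varphi_\ell$ for the scale-$\ell$ mollification by the sphere kernel of Theorem~\ref{thm:main} (the $\gamma\to 0$ limit of $\varphi_{\ell,\gamma}$, a ball-average type kernel), I mollify~\eqref{eqn:NSE:Euler}, pair with $u_\nu^\ell$, and integrate over $\T^d$; the divergence and $\Delta$ terms drop, the pressure term vanishes by incompressibility, and the nonlinear flux becomes exactly $S^\nu_\bullet(t,\ell)/\ell$, which is precisely the content of~\eqref{eq:main:balance:thm} before passing to the limit $\ell\to 0$. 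Integrating in time and adding $\varepsilon_\nu(t)$ yields
\[
\int_0^t\frac{S^\nu_\bullet(r,\ell)}{\ell}\,\dee r + \varepsilon_\nu(t) = \big(\varepsilon_\nu(t) - \varepsilon_\nu^\ell(t)\big) + \nu\int_0^t \|\na u_\nu^\ell\|_{L^2(\T^d)}^2\,\dee r \, ,
\]
where $\varepsilon_\nu^\ell(t) = \tfrac12\|u_0^\ell\|_{L^2}^2 - \tfrac12\|u_\nu^\ell(t)\|_{L^2}^2 + \int_0^t \langle f_\nu^\ell, u_\nu^\ell\rangle\,\dee r$ is the scale-$\ell$ energy defect. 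Crucially this identity requires no energy equality, as it follows from the mollified equation, which holds as a genuine identity for Leray--Hopf solutions. It remains to show that both terms on the right vanish in the stated iterated limit.

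The main obstacle, and the point at which the sharp length scale~\eqref{correct:lengthscale} enters, is the viscous term. Since $\varphi_\ell$ has unit mean, I write $\na u_\nu^\ell(x) = \int \na\varphi_\ell(y)\big(u_\nu(x-y)-u_\nu(x)\big)\,\dee y$ and use the uniform $B^\alpha_{2,\infty}$ bound on increments to get
\[
\|\na u_\nu^\ell(t)\|_{L^2} \lesssim \int |\na\varphi_\ell(y)|\,|y|^\alpha \,\dee y\;\|u_\nu(t)\|_{B^\alpha_{2,\infty}} \lesssim \ell^{\alpha-1}\|u_\nu(t)\|_{B^\alpha_{2,\infty}} \, ,
\]
so that $\nu\int_0^t \|\na u_\nu^\ell\|_{L^2}^2 \lesssim \nu \ell^{2\alpha-2}\|u_\nu\|_{L^2_t B^\alpha_{2,\infty}}^2 \lesssim \nu\,\ell^{2\alpha - 2}$ uniformly in $\nu$ and $t$. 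Because $2\alpha - 2<0$ this is largest at $\ell=\ell_D=\nu^L$, where it is bounded by $\nu^{\,1 - 2L(1-\alpha)}$; the exponent is positive \emph{exactly} when $L<\tfrac{1}{2(1-\alpha)}$, which is~\eqref{correct:lengthscale}. Hence $\sup_{\ell\in[\ell_D,\ell_I]}\nu\int_0^t\|\na u_\nu^\ell\|_{L^2}^2 \to 0$ as $\nu\to 0$ for each fixed $\ell_I$. This is the step making proper use of the $L^3_t C^{\sfrac13-}_x$ regularity: for the example of~\cite{BCCDLS} it gives $\alpha=\sfrac13-$, hence $\ell_D=\nu^{\sfrac34-}$, the Kolmogorov scale, improving on~\cite{HPZZ}.

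For the smoothing error $\varepsilon_\nu(t)-\varepsilon_\nu^\ell(t)$ I split into the initial-data difference $\tfrac12(\|u_0\|^2-\|u_0^\ell\|^2)$, the terminal difference $-\tfrac12(\|u_\nu(t)\|^2-\|u_\nu^\ell(t)\|^2)$, and the forcing difference $\int_0^t\langle f_\nu, u_\nu - u_\nu*\psi_\ell\rangle\,\dee r$ with $\psi_\ell=\varphi_\ell*\varphi_\ell$. The first is independent of $\nu$ and tends to $0$ as $\ell\to 0$ by continuity of mollification on $L^2$. For the other two I use $\|u_\nu - u_\nu*\psi_\ell\|_{L^2}\lesssim \ell^\alpha\|u_\nu\|_{B^\alpha_{2,\infty}}$, interpolated against $\|u_\nu\|_{L^\infty_t L^2}\le C$ (from the energy inequality and~\eqref{k41regularity}), to trade a small power $\ell^{\alpha\theta}$ for time-integrability: choosing $\theta\le\tfrac{2\sigma}{1+\sigma}$ and applying H\"older in time with $f_\nu\in L^{1+\sigma}_t L^2$ bounds the forcing term by $O(\ell^{\alpha\theta})$ uniformly in $\nu$, while choosing $\theta=\sfrac2p$ controls the terminal term in $L^p_t$ by $O(\ell^{2\alpha/p})$. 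All of these are largest at $\ell=\ell_I$, so $\sup_{\ell\in[\ell_D,\ell_I]}\|\varepsilon_\nu(\cdot)-\varepsilon_\nu^\ell(\cdot)\|_{L^p_t}\to 0$ as $\ell_I\to 0$, uniformly in $\nu$.

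Assembling these, the $L^p_t$-norm of the right-hand side of the identity is bounded by $C\nu^{\,1-2L(1-\alpha)} + o_{\ell_I\to 0}(1)$ uniformly over $\ell\in[\ell_D,\ell_I]$; sending first $\nu\to 0$ (killing the viscous term) and then $\ell_I\to 0$ (killing the smoothing error) gives~\eqref{43:law}. For the final uniform-in-time statement, the hypothesis $u_\nu\in L^\infty_t B^{\ov\alpha}_{2,\infty}$ lets me bound the terminal and forcing differences pointwise in $t$ by $\ell^{\ov\alpha}$ times the uniform $L^\infty_t B^{\ov\alpha}$ and $L^1_t L^2$ norms, so no time-integrability is lost and $\sigma=0$ is permissible; the viscous estimate is unchanged since the $L^2_t B^\alpha_{2,\infty}$ bound from~\eqref{k41regularity} still holds, and replacing the $L^p_t$-norm by $\sup_{t\in[0,1]}$ throughout yields the second display.
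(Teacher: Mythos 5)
Your overall strategy coincides with the paper's: write an exact pre-limit energy balance at scale $\ell$, show the viscous term is what forces the threshold $L<\frac{1}{2(1-\alpha)}$, and show the smoothing errors vanish as $\ell_I\rightarrow 0$ uniformly in $\nu$. Your treatment of the error terms (the interpolation trading Besov regularity for time integrability with $\theta\leq\frac{2\sigma}{1+\sigma}$ and $\theta=2/p$, continuity of translation in $L^2$ for the initial data, the order of limits, and the use of $L^\infty_t B^{\ov\alpha}_{2,\infty,x}$ for the uniform-in-time statement) is correct and matches the paper. The gap is in the derivation of the identity itself. Mollifying the equation and pairing with $u_\nu^\ell$ produces the balance for $\frac12|u_\nu^\ell|^2$, whose nonlinear flux is the Constantin--E--Titi commutator $-\int \pa_j u^{\ell,i}_\nu\left[(u^iu^j)^\ell_\nu-u^{\ell,i}_\nu u^{\ell,j}_\nu\right]$, \emph{not} the third-order structure function $S^\nu_\bullet(t,\ell)/\ell$: the kernel enters quadratically rather than once, and the two fluxes do not coincide at fixed $\ell$ --- which is exactly the regime the corollary addresses. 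The identity that produces $S^\nu_\bullet/\ell$ is the balance for the \emph{mixed} quantity $u^i_\nu u^i_{\bullet,\ell,\nu}$, i.e.~\eqref{eq:main:balance} with $\gamma=0$, obtained in Theorem~\ref{thm:main} by testing with both $(\phi u)\ast\varphi_\ell$ and $\phi\, u_{\bullet,\ell}$; the paper applies it with $\phi=\mathbf{1}_{[0,T]}$ to get~\eqref{terms}. So the claim ``the nonlinear flux becomes exactly $S^\nu_\bullet(t,\ell)/\ell$'' is false for the identity you actually wrote down.

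Once the identity is corrected, the error terms change form in ways that matter for one of your estimates. The smoothing defect becomes $\frac12\int(|u|^2-u\cdot u_{\ell})$ at times $0$ and $T$ together with $\int f_\nu(u_\nu-u_{\ell,\nu})$ (single rather than double mollification); your estimates transfer verbatim. But the viscous term becomes the mixed quantity $\nu\int\pa_k u^j_{\ell,\nu}\,\pa_k u^j_\nu$, which your pure-Besov bound $\nu\ell^{2(\alpha-1)}$ does not control: you must spend one factor of the Leray energy inequality, $\nu^{\sfrac 12}\|\nabla u_\nu\|_{L^2_{t,x}}\lesssim 1$, against $\nu^{\sfrac 12}\ell^{\alpha-1}\|u_\nu\|_{L^2_tB^\alpha_{2,\infty,x}}$, yielding $\nu^{\sfrac 12-L(1-\alpha)}$ --- which fortunately gives the same condition~\eqref{correct:lengthscale} as your exponent $1-2L(1-\alpha)$. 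Finally, for $\bullet=L,T$ the field $u_{\bullet,\ell}$ is not divergence-free, so the pressure term does not ``vanish by incompressibility''; the paper must rewrite $\pa_i(T^{ik}_\bullet\varphi_{\ell,\gamma,\kappa})$ as the gradient of a radial potential and, for $\bullet=L$, subtract an auxiliary balance built from a specially chosen kernel $\ov\varphi_{\ell,\gamma}$ to isolate the longitudinal structure function. Your sketch elides these points.
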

\begin{remark}[\textbf{Commentary on the assumptions of Corollary~\ref{local:limit}}]
    We have assumed in~\eqref{k41regularity} a uniform bound for $u_\nu$ in $L^2_t B^\alpha_{2,\infty,x}$ in analogy with second-order structure function exponents, which pertain to averages in space and time of squared velocity increments. In fact this assumption may be weakened by replacing the uniform Besov bound on $u^\nu$, which bounds a supremum over translations by $|y|>0$ of an appropriate integral, by a uniform-in-$\nu$ bound for the supremum over translations by $|y| \in [\ell_D, \ell_I]$ of the same integral.  We thank T.~Drivas for pointing out this strengthening,
    and for pointing us to his paper with Nguyen~\cite{DrivasNguyen}, which uses this assumption and the same choice of dissipative length scale as in~\eqref{correct:lengthscale} to ensure that weak limits of Leray-Hopf solutions of Navier-Stokes produce weak Euler solutions in $L^2_t B^{\alpha}_{2,\infty,x}$.  We emphasize that the choice of length scale $\ell_D$ depends only on $\alpha$ and not $\ov \alpha$ from the optional assumption. In~\cite{HPZZ}, the authors assume a uniform $L^1_t H^{\alpha}_x$ bound, whichever however implies a uniform $L^2_t H^{\sfrac \alpha 2}_x$ bound by interpolation with the uniform $L^\infty_t L^2_x$ bound implied by the assumption of Leray solutions emanating from a fixed initial datum $u_0\in L^2$.  We remark that the initial data $u_0$ in~\cite{HPZZ} is taken to belong to $H^\beta$ for some $\beta>0$, although this is in fact not necessary for the proof.
\end{remark}
\begin{remark}[\textbf{An example of Kolmogorov's laws in the inviscid limit}]
    In~\cite[Theorem~A]{BCCDLS}, Bru\`e et al construct a sequence of Leray-Hopf solutions $u_\nu$ to the 3D Navier-Stokes equations forced by a family of smooth forces $f_\nu$, emanating from a single initial data $u_0$, which satisfy~\eqref{k41regularity} for $q=3$ and $\alpha=\sfrac 13-$ (actually $L^3_t C^{\sfrac 13-}_x$).  From Corollary~\ref{local:limit}, we therefore have that the sequence satisfies Kolmogorov's laws in a range of length scales (nearly) commensurate with the usual K41 dissipative length scale $\ell_D(\nu) \sim \nu^{\sfrac 34}$.
\end{remark}


We prove Theorem~\ref{thm:main} in section~\ref{sec:two}. Then in section~\ref{sec:three}, we prove Corollary~\ref{local:limit}.  Finally, appendix~\ref{sec:app} contains a few technical tools used throughout.

\subsubsection*{Acknowledgments} This work was supported by NSF Grant DMS-2307357. The author is grateful to Michael Novack for helpful discussions.  The author thanks Theodore Drivas for commentary on a draft of this paper.

\section{Proof of Theorem~\ref{thm:main}}\label{sec:two}
Throughout this section, we shall use the following notations.  Set 
\begin{equation}\label{eq:varphi}
\varphi_{\ell,\gamma}=\ell^{-d}\varphi_\gamma(\sfrac \cdot \ell),
\end{equation}
for any radially symmetric, non-negative kernel $\varphi_{\gamma}$ with gradient supported in a neighborhood of size $\gamma$ around $\S^{d-1}$, where $\gamma\in[0,1]$; note that then $\varphi_{\ell,\gamma}$ has gradient supported in a neighborhood of size $\gamma\ell$ around $\S^{d-1}_\ell$. We do not necessarily assume that $\varphi_{\ell,\gamma}$ integrates to $1$, as it will be convenient later to choose a particular kernel which does not have unit mass. Then for $f$ an integrable function, we set
\begin{subequations}\label{increments}
\begin{align}
    u^i_{\bullet, \ell, \gamma}(x) &= \int_{\T^d} u^j(x+y) T^{ij}_\bullet(y) \varphi_{\ell,\gamma}(y) \, \dee y \, , \\
    \left(u^i_\bullet f \right)_{\ell,\gamma}(x) &= \int_{\T^d} u^j(x+y) T^{ij}_{\bullet}(y) f(x+y) \varphi_{\ell,\gamma}(y) \, \dee y \, , \\
    \left(u^i_\bullet u^i_\bullet f \right)_{\ell,\gamma}(x) &= \int_{\T^d} u^j(x+y) T^{ij}_{\bullet}(y) u^i(x+y) f(x+y) \varphi_{\ell,\gamma}(y) \, \dee y     \, .
\end{align}
\end{subequations}
Note that when $\gamma=0$, $\varphi_{\ell,\gamma}$ is not smooth, and convolution with $\varphi_{\ell,\gamma}$ induces a weighted integral over the ball of radius $\ell>0$. 
We split the proof up into steps, in which we first perform some test function computations in \texttt{Step 0}, before addressing the cases $\bullet=I,L,T$ in the following steps.

\noindent\texttt{Step 0: Test function computations}

\noindent Let $\varphi_{\ell,\gamma,\kappa,\bullet}(y)=\varphi_{\ell,\gamma}(y)c_{\kappa,\bullet}(y)$, where if $\bullet=L,T$, $c_{\kappa,\bullet}$ is smooth, radially symmetric, non-decreasing in $|y|$, takes values in $[0,1]$, and satisfies $c_{\kappa,\bullet}(0)=0$ and $c_{\kappa,\bullet}(y)=1$ for $y\geq\kappa$, where $\kappa\ll \ell,\gamma$, or $c_{\kappa,\bullet}\equiv 1$ if $\bullet =I$. Since the computations in \texttt{Step 0} only require $c_{\kappa,\bullet}$ to be smooth, we shall suppress the dependence on $\bullet$ and write simply $\varphi_{\ell,\kappa,\gamma}$. Note that $T^{ij}_\bullet \varphi_{\ell,\gamma,\kappa}$ is now a smooth kernel for $\gamma>0$ since the possible singularity at $y=0$ has been excised.  For a smooth test function $\phi:[0,T]\times\T^d\rightarrow \R^d$, set
$$ \phi_{\bullet}^i \ast \varphi_{\ell,\ga,\kappa}(t,x) = \int_{\T^d} \phi^j(t,x+y) T_{\bullet}^{ij}(y) \varphi_{\ell,\ga,\kappa}(y) \, \dee y \, , $$
and define versions which depend on $\kappa$ of the quantities in~\eqref{increments} similarly. Now testing~\eqref{eqn:NSE:Euler} with $\phi_\bullet^i \ast \varphi_{\ell,\ga,\kappa}$ using~\eqref{weak:formulation}, changing variables $x+y\rightarrow x$ and $y\rightarrow -y$, and using the radial symmetry of $\varphi_{\ell,\ga,\kappa}$ and $T^{ij}_\bullet$, we have that
\begin{align}
     \int_{[0,T]\times\T^d} &\phi^i_\bullet\ast\varphi_{\ell,\gamma,\kappa}(t,x) f^i(t,x) \, \dee t \, \dee x + \int_{\T^d} \left[ \phi^i_\bullet\ast\varphi_{\ell,\gamma,\kappa}(0,x) u_0^i(x) - \phi^i_\bullet \ast \varphi_{\ell,\gamma,\kappa}(T,x) u^i(T,x) \right] \, \dee x \notag \\
     &=\int_{[0,T]\times\T^d} \int_{\T^d} \varphi_{\ell,\ga,\kappa}(y) T_\bullet^{ij}(y) \bigg{[} -\pa_t \phi^j(t,x) u^i(t,x+y) - \pa_k \phi^j(t,x) (u^k u^i)(t,x+y) \notag\\
    &\qquad \qquad \qquad - \pa_i \phi^j(t,x) p(t,x+y) + \nu \pa_k \phi^i(t,x) \cdot \pa_k u^i(t,x+y) \bigg{]} \, \dee y \,  \dee t \, \dee x \,  . \label{setting:up} 
\end{align}
We claim that we can actually choose $\phi^i=\phi u^i$ in the above computation, where $\phi$ is a scalar-valued test function, and that the integral identity remains valid.  To do so, we must justify passing to the limit $\phi_n^i \rightarrow u^i$ in each term, where $\{\phi_n^i\}_{n\geq 1}$ are smooth test functions approximating $u^i$ in appropriate topologies.  First, we see that if $f\in L^{\sfrac 32}_{t,x}$ or $f\in L^1_t L^2_x$, then the first term from the first line can be bounded using the $L^3_{t,x}$ or the $C^0_t L^2_x$ bound on $u$. For the second term on the first line, we use the $C^0_t L^2_x$ bound on $u$. For the first term following the equals sign (with the time derivative), we note that the above expression implies that $\partial_t u^i_{\ell,\gamma,\kappa}$, in the sense of distributions, is equal to
\begin{align}
     &\int_{[0,T]\times\T^d} \phi^i_\bullet\ast\varphi_{\ell,\gamma,\kappa}(t,x) f^i(t,x) \, \dee t \, \dee x + \int_{\T^d} \left[ \phi^i_\bullet\ast\varphi_{\ell,\gamma,\kappa}(0,x) u_0^i(x) - \phi^i_\bullet \ast \varphi_{\ell,\gamma,\kappa}(T,x) u^i(T,x) \right] \, \dee x \notag \\
    &+\iint_{[0,T]\times\T^d\times \T^d} \varphi_{\ell,\ga,\kappa}(y) T_\bullet^{ij}(y) \bigg{[} - \pa_k \phi^j(t,x) (u^k u^i)(t,x+y) - \pa_i \phi^j(t,x) p(t,x+y) \notag\\
    &\qquad \qquad \qquad \qquad \qquad \qquad \qquad \qquad + \nu \pa_k \phi^i(t,x) \cdot \pa_k u^i(t,x+y) \bigg{]} \, \dee y \,  \dee t \, \dee x \notag 
\end{align}
and therefore is a bounded linear functional on $\phi\in L^3_{t,x} \cap C^0_t L^2_x$ if $\nu=0$, and $\phi \in L^3_{t,x} \cap C^0_t L^2_x \cap L^2_t \dot H^1_x$ if $\nu >0$.  Indeed if $\nu=0$ and $f\in L^{\sfrac 32}_{t,x}$ or $f\in L^1_t L^2_x$, $u_0 \in L^2_x$, $u\in (C^0 L^2_x) \cap L^3_{t,x}$, or if $\nu >0$ and additionally $u\in L^2_t \dot H^1_x$, then the above expression is bounded by the $L^3_{t,x} \cap C^0_t L^2_x$ norm of $\phi$.  Under these assumptions, we then have that~\eqref{setting:up} becomes, upon plugging in $\phi^i=\phi u^i$ and integrating by parts,
\begin{align}
     &\int_{[0,T]\times\T^d} (\phi u^i) \ast \varphi_{\ell,\ga,\kappa}(t,x) f^i(t,x) \, \dee t \, \dee x \label{eq:1} \\
     &\qquad =\int_{[0,T]\times\T^d} \int_{\T^d} \varphi_{\ell,\ga,\kap}(y) T_\bullet^{ij}(y) \bigg{[} \phi(t,x) u^j(t,x) \pa_t u^i(t,x+y) + \phi(t,x) u^j(t,x) \pa_k(u^k u^i)(t,x+y) \notag\\
    &\qquad \qquad \qquad \qquad - \pa_i (\phi(t,x) u^j(t,x)) p(t,x+y) + \nu \pa_k (\phi (t,x) u^j(t,x)) \cdot \pa_k u^i(t,x+y) \bigg{]} \, \dee y \, \dee t \, \dee x \, . \notag 
\end{align}

Next, we claim that we can test~\eqref{eqn:NSE:Euler} with
\begin{align}\notag 
    U^k(t,x) = \phi(t,x) \int_{\T^d} u^i(t,x+y) T^{ik}_\bullet(y) \varphi_{\ell,\ga,\kap}(y) \, \dee y = \phi(t,x) u^k_{\bullet,\ell,\gamma,\kap}(t,x) \, ,
\end{align}
where $\phi$ is any smooth test function. Indeed by the same arguments that allowed us to obtain~\eqref{eq:1}, $\pa_t U^k$ is a bounded linear functional on $L^{3}_{t,x}\cap C^0_t L^2_x$ and $\pa_j U^k \in L^3_{t,x}$, while if $\nu>0$ we additionally have $U^k \in L^2_t \dot H^1_x$. We therefore deduce that
\begin{align}
     \int_{[0,T]\times\T^d} & \phi(t,x) u^k_{\bullet,\ell,\gamma,\kap}(t,x) f^k(t,x) \, \dee t \, \dee x \notag\\
     &\qquad \qquad + \int_{\T^d} \left[\phi(0,x) u^k_{\bullet,\ell,\gamma\,\kap}(0,x)(0,x) u_0^k(x) - \phi(T,x) u^k_{\bullet,\ell,\gamma,\kap}(T,x) u^k(T,x) \right]\, \dee x \notag \\
     &=\int_{[0,T]\times\T^d} \bigg{[} -\pa_t \left( \phi(t,x) \int_{\T^d} u^i(t,x+y) T^{ik}_\bullet(y) \varphi_{\ell,\ga,\kap}(y) \, \dee y \right) u^k(t,x) \notag\\
     &\qquad \qquad \qquad - \pa_j \left( \phi(t,x) \int_{\T^d} u^i(t,x+y) T^{ik}_\bullet(y) \varphi_{\ell,\ga,\kap}(y) \, \dee y \right) (u^k u^j)(t,x) \notag\\
    &\qquad \qquad \qquad - \pa_i \left( \phi(t,x) \int_{\T^d} u^k(t,x+y) T^{ik}_\bullet (y) \varphi_{\ell,\ga,\kap}(y) \, \dee y \right) p(t,x) \notag\\
    &\qquad \qquad \qquad + \nu \pa_k \left( \phi(t,x) \int_{\T^d} u^j(t,x+y) T^{ij}_\bullet (y) \varphi_{\ell,\ga,\kap}(y) \, \dee y \right) \cdot \pa_k u^i(t,x) \bigg{]} \, \dee t \, \dee x \,  . \label{eq:2} 
\end{align}
We now split into cases based on $\bullet=I,L,T$.
\smallskip

\noindent
\texttt{Step 1: $\bullet=I$}

\noindent We first prove that 
    \begin{align}
        \pa_t \left(u^i u^i_{I,\ell,\gamma} \right) &+ \partial_j \left( u^i u^i_{I,\ell,\gamma} u^j + \frac 12 \left(  u^i_I u^i_I u^j \right)_{\ell,\gamma} - \frac 12 \left( u^i_I u^i_I \right)_{\ell,\gamma} u^j \right) + \pa^i \left( p u^i_{I,\ell,\gamma} + p_{\ell,\gamma} u^i \right) \notag\\
        &- u^i f^i_{I,\ell,\gamma} - u^i_{I,\ell,\gamma} f^i - \nu \left( \Delta(u^i u^i_{I,\ell,\gamma}) - 2\pa_k u^i \pa_k u^i_{I,\ell,\gamma} \right) = -2 D_{I,\ell,\gamma}[u] \, , \label{eq:main:balance}
    \end{align}
where 
\begin{align}\label{firstD}
    D_{I,\ell,\gamma}[u]=\frac 14 \int_{\T^d} \pa_j \varphi_{\ell,\ga}(y) \left( u^j(t,x+y) - u^j (t,x) \right) \left| u(t,x+y) - u(t,x) \right|^2 \, \dee y \, , \qquad \gamma>0 \, ,
\end{align}
and if $\gamma=0$, 
\begin{equation}
    D_{I,\ell,0}[u]=\frac{-d}{4\ell}\dashint_{\S^{d-1}} y^j \left( u^j(t,x+\ell y) - u^j(t,x) \right) \left| u(t,x+\ell y) - u(t,x) \right|^2  \, \dee y \, . \notag
\end{equation}
Adding together~\eqref{eq:1} and~\eqref{eq:2} and using that $T_I=\Id$ and $c_{\kappa}\equiv 1$ to simplify, we obtain that 
\begin{align}
     \int_{[0,T]\times\T^d} & \left[(\phi u^k) \ast \varphi_{\ell,\gamma }(t,x) f^k(t,x) + \phi(t,x) u^k_{I,\ell,\gamma }(t,x) f^k(t,x) \right] \, \dee t \, \dee x \notag\\
     &+ \int_{\T^d} \left[\phi(0,x) u^k_{I,\ell,\gamma }(0,x) u_0^k(x) -  \phi(T,x) u^k_{I,\ell,\gamma }(T,x) u^k(T,x)\right] \, \dee x \notag \\
     &=\int_{[0,T]\times\T^d} \bigg{[} -\pa_t \phi(t,x) u^k(t,x) \int_{\T^d} u^k(t,x+y) \varphi_{\ell,\ga}(y) \, \dee y   \notag\\
     &\qquad \qquad \qquad -  \int_{\T^d} \left(\pa_j \phi(t,x) u^i(t,x+y) + \phi(t,x) \pa_j u^i(t,x+y) \right)  \varphi_{\ell,\ga }(y)  (u^i u^j)(t,x) \, \dee y \notag\\
     &\qquad\qquad  + \int_{\T^d} \phi(t,x) u^i(t,x) \varphi_{\ell,\ga }(y)  \pa_j(u^i u^j)(t,x+y) \, \dee y \notag\\
    &\qquad - \int_{\T^d} \varphi_{\ell,\ga }(y) \left(  \pa_i \phi(t,x) u^i(t,x+y)   p(t,x) - \pa_i \phi(t,x) u^i(t,x) p(t,x+y) \right) \, \dee y \notag\\
    &\qquad + \nu (- \pa_{kk} \phi(t,x) u^j(t,x) u^j_{I,\ell,\ga } + 2 \phi(t,x) \pa_k u^j_{I,\ell,\ga }(t,x) \pa_k u^j(t,x) \bigg{]} \, \dee t \, \dee x \,  . \notag
\end{align}
Now we add 
\begin{equation}\label{eq:adding}
 \frac 12 \int_{[0,T]\times\T^d} \phi(t,x) \left[ \pa_j \left( u^j |u_I|^2 \right)_{\ell,\gamma } - u^j \pa_j \left( |u_I|^2 \right)_{\ell,\gamma }  \right] \, \dee t \, \dee x 
\end{equation}
to both sides, note that
\begin{align}
 \int_{\T^d} -\pa_{j}\varphi_{\ell,\gamma }(y) f(x+y) \, \dee y = \int_{\T^d} \varphi_{\ell,\gamma }(y) \pa_{j} f(x+y) \, \dee y &= \int_{\T^d} \varphi_{\ell,\gamma }(y) \pa_{j} f(x-y) \, \dee y \notag\\ 
 &= \int_{\T^d} \pa_j \varphi_{\ell,\gamma }(y)  f(x-y) \, \dee y\notag\\
 &= \pa_j f_{\ell,\gamma } \label{mollies}
\end{align}
due to the radial symmetry of $\varphi_{\ell,\gamma}$,
and rearrange to deduce that
\begin{align}
    \int_{[0,T]\times\T^d} &\bigg{[} -\pa_t \phi  \left( u^k u^k_{I,\ell,\gamma } \right) - \partial_j \phi \left( u^i_{I,\ell,\gamma } u^i u^j + \frac 12 \left( \left( u^j |u_I|^2 \right)_{\ell,\gamma } - u^j \left( |u_I|^2 \right)_{\ell,\gamma } \right) \right) - \nu \Delta \phi \left( u^j u^j_{I,\ell,\gamma } \right)  \notag\\
    &\qquad + 2 \nu \phi \pa_k u^j_{I,\ell,\gamma } \pa_k u^j - \pa_i\phi (u^i_{I,\ell,\gamma } p + u^i p_{I,\ell,\gamma }) \bigg{]} \, \dee t \, \dee x \notag\\
    &=\int_{[0,T]\times\T^d} \left[(\phi u^k) \ast \varphi_{\ell,\gamma }(t,x) f^k(t,x) + \phi(t,x) u^k_{I,\ell,\gamma }(t,x) f^k(t,x) \right] \, \dee t \, \dee x \notag\\
     &\quad + \int_{\T^d} \left[\phi(0,x) u^k_{I,\ell,\gamma }(0,x) u_0^k(x) -  \phi(T,x) u^k_{I,\ell,\gamma }(T,x) u^k(T,x)\right] \, \dee x \notag \\
     &\quad +\int_{\T^d\times[0,T]} \phi  \left[ \pa_j u^i_{I,\ell,\gamma } u^i u^j  - u^i \pa_j \left( u^j_I u^i_I \right)_{\ell,\gamma } + \frac 12 \pa_j \left( \left( u^j |u_I|^2 \right)_{\ell,\gamma } - u^j \left( |u_I|^2 \right)_{\ell,\gamma } \right) \right] \, \dee t \, \dee x \, . \label{last:one}
\end{align}
We now consider the last term on the right-hand side.  After using that $\div u = 0$ and~\eqref{mollies} to simplify the expression
\begin{align}\label{DR}
    -\frac 12 \int_{\T^d\times[0,T]} \int_{\T^d} \phi(t,x) \pa_j \varphi_{\ell,\gamma }(y) \left( u^j(t,x+y) - u^j(t,x) \right) \left| u(t,x+y) - u(t,x) \right|^2  \, \dee y \, \dee t \, \dee x 
\end{align}
coming from~\eqref{firstD} multiplied by $-2$, we find that the last term on the right-hand side of~\eqref{last:one} is in fact equal to~\eqref{DR}, completing the proof of~\eqref{eq:main:balance} when $\gamma>0$.


We now work to prove~\eqref{eq:main:balance} when $\gamma=0$, which requires passing to the limit $\gamma\rightarrow 0$ in~\eqref{last:one} and~\eqref{DR}. We first pass to the limit in every term from~\eqref{last:one} except the very last term (which is now~\eqref{DR}), using the integrability assumptions on all involved quantities and the dominated convergence theorem.  Now in order to pass to the limit $\gamma\rightarrow 0$ in~\eqref{DR}, we will use that $\varphi_{\ell,\gamma}(y) = \ell^{-d} \varphi_\gamma(|y|/\ell)$, where $\varphi_\gamma$ is smooth, positive, integrates to $1$, and has gradient supported in a $\gamma$-neighborhood around the sphere of radius $1$. Then changing to spherical variables $y\rightarrow (r,\sigma)$, we rewrite~\eqref{DR} (ignoring the $-\sfrac 12$ prefactor) as
\begin{align}
    & \int_{\T^d} \int_0^T \int_{0}^\infty \int_{\S^{d-1}} \phi(t,x) \frac{\varphi_{\gamma}'\left(\frac{r}{\ell}\right)r^{d}}{r\ell^{1+d}} \sigma_j \left( u^j(t,x+r \sigma ) - u^j(t,x) \right) \left| u(t,x+r\sigma) - u(t,x) \right|^2  \, \dee \sigma \, \dee r \, \dee t \, \dee x \notag\\
    &= \int_{\T^d}\int_0^T\int_{0}^\infty \phi(t,x) \frac{\varphi_{\gamma}'\left(r'\right) (r')^d }{\ell r'} \int_{\S^{d-1}} \sigma_j \left( u^j(t,x+ \ell r' \sigma ) - u^j(t,x) \right) \notag\\
    &\qquad \qquad \qquad \qquad \qquad \qquad \times \left| u(t,x+\ell r'\sigma) - u(t,x) \right|^2  \, \dee \sigma \, \dee r' \, \dee t \, \dee x \, . \notag 
\end{align}
When $\gamma\rightarrow 0$, we use~\eqref{dumber:bound} to pass to the limit and obtain
that~\eqref{DR} converges to
\begin{equation}
    \frac{d}{2\ell} \int_{\T^d\times[0,T]}\dashint_{\S^{d-1}} \phi(t,x) y^j \left( u^j(t,x+\ell y) - u^j(t,x) \right) \left| u(t,x+\ell y) - u(t,x) \right|^2  \, \dee y \, \dee t \, \dee x \, , \notag
\end{equation}
concluding the proof of~\eqref{eq:main:balance} for $\gamma=0$. In order to prove~\eqref{eq:main:balance:thm}, we have that the left-hand side of~\eqref{eq:main:balance} converges in the sense of distributions as $\ell\rightarrow 0$ to the left-hand side of~\eqref{eq:main:balance:thm}, which guarantees that $\lim_{\ell\rightarrow 0}D_{I,\ell,0}[u]= D[u]$ by the uniqueness of distributional limits, concluding the proof of Theorem~\ref{thm:main} for $\bullet=I$.

\smallskip
\noindent\texttt{Step 2: $\bullet = L, T$}

\noindent Using that $T_L=\frac{y \otimes y}{|y|^2}$ and $T_T = T_I - T_L$, we first simplify the terms from~\eqref{eq:1} and~\eqref{eq:2} involving the pressure above by noting that for the radially symmetric kernels $\varphi_{\ell,\gamma,\kappa,\bullet}=\varphi_{\ell,\gamma} c_{\kappa,\bullet}$, 
\begin{align*}
     \partial_i \left( T^{ik}_L(y)\varphi_{\ell,\ga,\kappa,L}(y) \right) &= \partial_i \left( \frac{y^i y^k}{|y|^2} \varphi_{\ell,\ga,\kappa,L}(y) \right) = \partial_k \left( \varphi_{\ell,\ga,\kappa,L}(y) - (d-1) \int_{|y|}^{\infty} \frac{\varphi_{\ell,\ga,\kappa,L}(|\ov y|)}{|\ov y|} \, \dee \ov y \right) \\ 
     &=: \partial_k \Psi_{\ell,\gamma,\kappa,L}\\
    \partial_i \left( T^{ik}_T(y)\varphi_{\ell,\ga,\kappa,L}(y) \right) &= \partial_i \left( \left( T^{ik}_I(y)  - T^{ik}_L(y) \right) \varphi_{\ell,\gamma,\kappa,T} \right) =: \partial_k \Psi_{\ell,\gamma,\kappa,T}
\end{align*}
are gradients of potentials. Using this to simplify and adding together~\eqref{eq:1} and~\eqref{eq:2}, we find that
\begin{align}
     &\int_{[0,T]\times\T^d}  \left[\phi u^k(t,x) f_{\bullet,\ell,\gamma,\kappa}^k(t,x) + \phi(t,x) u^k_{\bullet,\ell,\gamma,\kappa}(t,x) f^k(t,x) \right] \, \dee t \, \dee x \notag\\
     &\qquad+ \int_{\T^d} \left[\phi(0,x) u^k_{\bullet,\ell,\gamma,\kappa}(0,x) u_0^k(x) -  \phi(T,x) u^k_{\bullet,\ell,\gamma,\kap}(T,x) u^k(T,x)\right] \, \dee x \notag \\
     &\quad =\int_{[0,T]\times\T^d} \bigg{[} -\pa_t \phi(t,x) u^k(t,x) \int_{\T^d} u^i(t,x+y) T^{ik}_\bullet(y) \varphi_{\ell,\ga,\kappa,\bullet}(y) \, \dee y   \notag\\
     &\qquad \qquad -  \int_{\T^d} \left(\pa_j \phi(t,x) u^i(t,x+y)  + \phi(t,x) \pa_j u^i(t,x+y) \right) T^{ik}_\bullet(y) \varphi_{\ell,\ga,\kap,\bullet}(y)  (u^k u^j)(t,x) \, \dee y \notag\\
     &\qquad\qquad  + \int_{\T^d} \phi(t,x) u^i(t,x) T^{ik}_\bullet(y) \varphi_{\ell,\ga,\kap,\bullet}(y)  \pa_j(u^k u^j)(t,x+y) \, \dee y \notag\\
    &\qquad - \int_{\T^d} \left(  \pa_i \phi(t,x) u^k(t,x+y) T^{ik}_\bullet (y) \varphi_{\ell,\ga,\kap,\bullet}(y)  p(t,x) - \pa_j \phi(t,x) u^j(t,x) \Psi_{\ell,\ga,\kap,\bullet}(y) p(t,x+y) \right) \, \dee y \notag\\
    &\qquad + \nu (- \pa_{kk} \phi(t,x) u^j(t,x) u^j_{\ell,\ga,\kap,\bullet} + 2 \phi(t,x) \pa_k u^j_{\ell,\ga,\kap,\bullet}(t,x) \pa_k u^j(t,x) \bigg{]} \, \dee t \, \dee x \,  . \label{withkappa}
\end{align}
We now pass to the limit $\kappa\rightarrow 0$ using the integrability assumptions on $u$ and $p$ and the dominated convergence theorem, obtaining that an identical version of~\eqref{withkappa} holds, with the $\kappa$ however removed.  Next, we add the analogue of~\eqref{eq:adding}, but with $\bullet=L,T$ instead of $\bullet=I$, to both sides, obtaining (after abbreviating the convolution of $\Psi_{\ell,\gamma,\bullet}$ with $p$ by $p_{\ell,\gamma,\bullet}$)
\begin{align}
    &\int_{[0,T]\times\T^d} \bigg{[} -\pa_t \phi  \left( u^k u^k_{\bullet,\ell,\gamma } \right) - \partial_j \phi \left( u^i_{\bullet,\ell,\gamma } u^i u^j + \frac 12 \left( \left( u^j |u_\bullet|^2 \right)_{\ell,\gamma } - u^j \left( |u_\bullet|^2 \right)_{\ell,\gamma } \right) \right) - \nu \Delta \phi \left( u^j u^j_{\bullet,\ell,\gamma } \right)  \notag\\
    &\qquad\qquad + 2 \nu \phi \pa_k u^j_{\bullet,\ell,\gamma } \pa_k u^j - \pa_i\phi (u^i_{\bullet,\ell,\gamma } p + u^i p_{\ell,\gamma,\bullet}) \bigg{]} \, \dee t \, \dee x \notag\\
    &\qquad=\int_{[0,T]\times\T^d} \left[\phi u^k (t,x) f_{\bullet,\ell,\gamma}^k(t,x) + \phi(t,x) u^k_{\bullet,\ell,\gamma }(t,x) f^k(t,x) \right] \, \dee t \, \dee x \notag\\
    &\qquad\quad + \int_{\T^d} \left[\phi(0,x) u^k_{\bullet,\ell,\gamma }(0,x) u_0^k(x) -  \phi(T,x) u^k_{\bullet,\ell,\gamma }(T,x) u^k(T,x)\right] \, \dee x \notag \\
    &\qquad\quad +\int_{\T^d\times[0,T]} \phi  \left[ \pa_j u^k_{\bullet,\ell,\gamma } u^k u^j  - u^i \pa_j \left( u^j u^i_\bullet \right)_{\ell,\gamma } + \frac 12 \pa_j \left( \left( u^j |u_\bullet|^2 \right)_{\ell,\gamma } - u^j \left( |u_\bullet|^2 \right)_{\ell,\gamma } \right) \right] \, \dee t \, \dee x \, . \label{last:one:L}
\end{align}
Now by direct computation, using that $\div \, u=0$, $\langle u, T_\bullet u \rangle = \langle T_\bullet u, T_\bullet u \rangle$, and the spherical symmetry of $\varphi_{\ell,\gamma}T_\bullet$ and anti-symmetry of its gradient $\nabla (\varphi_{\ell,\gamma}T_\bullet)$, we may rewrite the last term from~\eqref{last:one:L} as
\begin{align}
    &\int_{\T^d\times[0,T]} \phi  \left[ \pa_j u^k_{L,\ell,\gamma } u^k u^j  - u^i \pa_j \left( u^j u^i_L \right)_{\ell,\gamma } + \frac 12 \pa_j \left( \left( u^j |u_L|^2 \right)_{\ell,\gamma } - u^j \left( |u_L|^2 \right)_{\ell,\gamma } \right) \right] \, \dee t \, \dee x \notag \\
    &\quad=- \frac 12 \int_{\T^d\times[0,T]}\int_{\T^d} \phi \, \partial_{y_k} \left( T^{ij}_L \varphi_{\ell,\gamma} \right)\bigg{[}  \left( u^i(t,x+y) - u^i(t,x) \right) \notag\\
    &\qquad \qquad \qquad \left( u^j(t,x+y) - u^j(t,x) \right) \left( u^k(t,x+y) - u^k(t,x) \right) \bigg{]} \, \dee y \, \dee t \, \dee x \notag \\
    &\quad=- \frac 12 \int_{\T^d\times[0,T]} \int_{\T^d} \phi \, \bigg{[} \nabla \varphi_{\ell,\gamma} \cdot \left( u(t,x+y) - u(t,x) \right) \left| \left( u(t,x+y) - u(t,x) \right) T_L \right|^2 \notag \\
    &\qquad \qquad \qquad \qquad + \frac{2y}{|y|^2}\varphi_{\ell,\gamma} \cdot \left( u(t,x+y) - u(t,x) \right) \left| \left( u(t,x+y) - u(t,x) \right) T_T \right|^2 \bigg{]} \, \dee y \, \dee t \, \dee x \label{mess:one}
\end{align}
if $\bullet =L$, and
\begin{align}
    &\int_{\T^d\times[0,T]} \phi  \left[ \pa_j u^k_{T,\ell,\gamma } u^k u^j  - u^i \pa_j \left( u^j u^i_T \right)_{\ell,\gamma } + \frac 12 \pa_j \left( \left( u^j |u_T|^2 \right)_{\ell,\gamma } - u^j \left( |u_T|^2 \right)_{\ell,\gamma } \right) \right] \, \dee t \, \dee x \notag \\
    &=-\frac 12\int_{\T^d\times[0,T]}\int_{\T^d} \phi \, \partial_{y_k} \left( T^{ij}_T \varphi_{\ell,\gamma} \right)\bigg{[}  \left( u^i(t,x+y) - u^i(t,x) \right) \notag\\
    &\qquad \qquad \qquad \left( u^j(t,x+y) - u^j(t,x) \right) \left( u^k(t,x+y) - u^k(t,x) \right) \bigg{]} \, \dee y \, \dee t \, \dee x \notag \\
    &=-\frac 12\int_{\T^d} \int_0^T \int_{\T^d} \phi \, \left( \nabla \varphi_{\ell,\gamma} - \frac{2y}{|y|^2} \varphi_{\ell,\gamma} \right) \left( u(t,x+y) - u(t,x) \right) \left| \left( u(t,x+y) - u(t,x) \right) T_T \right|^2 \, \dee y \, \dee t \, \dee x \label{ODE}
\end{align}
if $\bullet=T$. 

We first work to prove an analogue of~\eqref{eq:main:balance}, but for $\bullet=L$.  We pass to the limit $\gamma\rightarrow 0$ in~\eqref{last:one:L} and the last line of~\eqref{mess:one}, obtaining for the latter
\begin{align*}
    &\frac{d}{2\ell}\int_{\T^d\times[0,T]} \dashint_{\S^{d-1}} \phi(t,x) \, y\cdot \left( u(t,x+\ell y) - u(t,x) \right) \left| T_L(y) \left( u(t,x+\ell y) - u(t,x) \right) \right|^2 \, \dee y \, \dee t \, \dee x \\
    & - \int_{\T^d}\int_0^T \int_{\T^d}  \phi(t,x) \frac{\mathbf{1}_{B_\ell(0)}(y)}{|B_\ell(0)| |y|^2} y\cdot \left( u(t,x+y) - u(t,x) \right) \left| \left( u(t,x+y) - u(t,x) \right) T_T \right|^2 \, \dee y \, \dee t \, \dee x \, .
\end{align*}
We wish to eliminate the second term in the above expression, so that we obtain an energy balance with the proper third-order longitudinal structure function on the right-hand side. To do so, we choose in~\eqref{ODE} and~\eqref{last:one:L}
$$ \ov\varphi_{\ell,\gamma}(y) = \frac{1}{|B_\ell(0)|} \left( \frac{|y|^2}{\ell^2} - 1 \right) \mathbf{1}_{\{0 \leq |y| \leq \ell\}}(y) \, , $$
which is a smooth function except at $|y|=\ell$, where it is however continuous; this choice may be justified by an application of the dominated convergence theorem. Note that 
$$ \partial_k \ov\varphi_{\ell,\gamma}(y) - \frac{2y_k}{|y|^2} \ov\varphi_{\ell,\gamma}(y) = \mathbf{1}_{\{0\leq |y| \leq \ell\}} (y) \frac{1}{|B_\ell(0)|} \left( \frac{2y_k}{\ell^2} - \frac{2y_k}{|y|^2}\left( \frac{|y|^2}{\ell^2}-1 \right) \right) = \frac{2y_k}{|B_{\ell}(0)| |y|^2} \mathbf{1}_{B_\ell(0)} (y) \, .  $$
We use this choice of $\ov\varphi_{\ell,\gamma}$ in~\eqref{last:one:L} for $\bullet=T$ and subtract the resulting balance from~\eqref{last:one:L} with $\bullet=L$ and $\gamma=0$, obtaining that
\begin{align}
    &\int_{[0,T]\times\T^d} \bigg{[} -\pa_t \phi  \left( u^k u^k_{L,\ell} \right) - \partial_j \phi \left( u^i_{L,\ell} u^i u^j + \frac 12 \left( \left( u^j |u_L|^2 \right)_{\ell} - u^j \left( |u_L|^2 \right)_{\ell} \right) \right) - \nu \Delta \phi \left( u^j u^j_{L,\ell} \right)  \notag\\
    &\qquad\qquad + 2 \nu \phi \pa_k u^j_{L,\ell} \pa_k u^j - \pa_i\phi (u^i_{L,\ell} p + u^i p_{L,\ell}) \bigg{]} \, \dee t \, \dee x \notag\\
    &\qquad\qquad-\int_{[0,T]\times\T^d} \left[\phi u^k (t,x) f_{L,\ell}^k(t,x) + \phi(t,x) u^k_{L,\ell}(t,x) f^k(t,x) \right] \, \dee t \, \dee x \notag\\
    &\qquad\qquad - \int_{\T^d} \left[\phi(0,x) u^k_{L,\ell}(0,x) u_0^k(x) -  \phi(T,x) u^k_{L,\ell}(T,x) u^k(T,x)\right] \, \dee x \notag \\
    &=  \frac{d}{2\ell} \int_{\T^d\times[0,T]} \dashint_{\S^{d-1}} \phi(t,x) \,  y \cdot \left( u(t,x+\ell y) - u(t,x) \right) \left| T_L(y) \left( u(t,x+\ell y) - u(t,x) \right) \right|^2 \, \dee y \, \dee t \, \dee x \, . \label{last:one:L:L}
\end{align}
where
\begin{equation}
    u^i_{L,\ell}(t,x) := \int_{\T^d} \left( \frac{1}{|B_\ell(0)|} \mathbf{1}_{\{ 0\leq |y|\leq \ell \}}(y) T_L^{ij}(y) - \ov\varphi_{\ell,\gamma}(y) T_T^{ij}(y) \right) u^j(t,x+y) \, \dee y \, , \notag 
\end{equation}
and $p_{L,\ell}$ is defined analogously.  In order to pass to the limit on both sides of~\eqref{last:one:L:L}, we first claim that for any $L^p$ vector field $g^k$, 
\begin{align}
    \lim_{\ell\rightarrow 0} \int_{\T^d} \left| g^k_{L,\ell}(x) - \frac{3d}{d(d+2)}  g^k(x) \right|^p \, \dee x =  0 \, . \notag
\end{align}
To prove this, we use that 
$\int_{\T^d} \ov\varphi_{\ell,\gamma}(y) \, \dee y = \frac{-2}{d+2}$, and $\int_{\T^d} T^{ij}_T(y) \, \dee y = \frac{d-1}{d}$, so that the integral of $\varphi_{\ell,\gamma}T^{ij}_T=\frac{-2(d-1)}{d(d+2)} \delta^{ij}$; in addition, we use that $\int_{\T^d} T^{ij}_L(y) \, \dee y = \frac{\delta^{ij}}{d}$. Computing similarly for $p_{L,\ell}$, combining these results, and passing to the limit $\ell\rightarrow 0$ in~\eqref{last:one:L:L}, we obtain that the left-hand side converges to $\frac{3d}{d(d+2)}$ multiplied by the left-hand side of~\eqref{eq:main:balance}.  Dividing the factor of $\frac{d}{2}$ on the right-hand side of~\eqref{last:one:L:L} by twice $\frac{3d}{d(d+2)}$ concludes the proof of~\eqref{fortyfive}.

In order to prove~\eqref{8:15}, we use that $|T_I v|^2 = |T_L v|^2 + |T_T v|^2$ for any vector $v$.  Then writing the coefficient on the left-hand side of~\eqref{4/15} as $\frac{1}{d C_T}$ for $C_T$ undetermined, we find $C_T$ must solve
$$  \frac{12}{d(d+2)} D_L + \frac{1}{dC_T} D_T = \frac{4}{d} D_I \qquad \underset{\eqref{eq:main:balance:thm}}{\implies} \qquad \frac{3}{d+2} + \frac{1}{4C_T} = 1 \qquad \iff \qquad C_T = \frac{d+2}{4(d-1)} \, . $$

\section{Proof of Corollary~\ref{local:limit}}\label{sec:three}
We treat only the $\sfrac 43$ law (or $\sfrac 4d$ in $d$ dimensions), using~\eqref{eq:main:balance} with $\gamma=0$. The proof of the $\sfrac{4}{5}$ law follows identically using~\eqref{last:one:L:L}, and the proof of the $\sfrac{4}{15}$ law follows again from additivity. Applying the distributional equality~\eqref{eq:main:balance} with $\gamma=0$, a test function $\phi(t,x)=\mathbf{1}_{[0,T]}(t)$, using $u_{\ell,\nu}$ to denote the average of $u_\nu$ on a ball of radius $\ell\in [\ell_D, \ell_I]$, and recalling~\eqref{structures} and~\eqref{epsy}, we have that
\begin{align}
    \int_0^T \frac{S_I^\nu(r,\ell)}{\ell} \, \dee r + \varepsilon_\nu(T) 
    &= \frac 12 \int_{\T^d} \left[ u_0^2(x) - u^k_{\ell,\nu}(x) u^k_0(x) + u_{\ell,\nu}^k(T,x) u^k_\nu(T,x) - |u_\nu(T,x)|^2 \right] \, \dee x \notag\\
    & \quad + \int_{\T^d\times[0,T]}  f_\nu(t,x) \left[ u_\nu(t,x) - u_{\ell,\nu}(t,x) \right] \, \dee t \, \dee x \notag\\
    & \quad + \int_{\T^d\times[0,T]} \nu \pa_k u_{\ell,\nu}^j \pa_k u_\nu^j \, \dee t \, \dee x \, . \label{terms}
\end{align}
Examining the first term from~\eqref{terms}, we may bound it by
\begin{align}\label{bound:1}
 &\left\| \dashint_{B_1(0)} u^k_0(x) \left( u^k_0(x+\ell y) - u^k_0(x) \, \dee y \right) \right\|_{L^1} + \left\| \dashint_{B_1(0)} u^k_\nu(T,x) \left( u^k_\nu(T,x+\ell y) - u^k_\nu(T,x) \, \dee y \right) \right\|_{L^1}  \, . \notag
\end{align}
The first term approaches zero as $\ell\rightarrow 0$ due to continuity of the integral for $L^2(\T^d)$ functions, while the second may be bounded by 
\begin{align} &\qquad \lesssim \| u_\nu(T) \|_{L^2(\T^d)} \ell^\alpha \| u_\nu(T) \|_{B^{\alpha}_{2,\infty}(\T^d)} \, .
\end{align}
Assuming that we have the optional uniform $L^\infty_t B^{\ov\alpha}_{2,\infty,x}$ bound for $u_\nu$, this term goes to zero as $\ell_I, \ell_D$ go to zero, no matter the precise choice of $\ell_D$. In the case the optional bound is not satisfied, we may interpolate the $L^\infty_tL^2_x$ and $L^2_t B^\alpha_{2,\infty,x}$ bounds to find, for any $p\in [1,\infty)$, an $\alpha(p)< \alpha$ such that $u_\nu$ is uniformly bounded in $L^{p}_t B^{\alpha(p)}_{2,\infty,x}$.  Then fixing $p$ in~\eqref{43:law}, using the uniform-in-$\nu$ $L^p_t B^{\alpha(p)}_{2,\infty,x}$ bound, and integrating~\eqref{bound:1} raised to the $p^{\rm th}$ power with respect to $T$ proves that this term goes to zero as $\ell \rightarrow 0$. 

For the second term from~\eqref{terms}, we may bound it by 
\[
 \| f_\nu \|_{L^1_t L^2} \ell^{\ov\alpha} \| u_\nu \|_{L^\infty_t B^{\ov\alpha}_{2,\infty}} \, , \qquad \textnormal{or} \qquad  \| f_\nu \|_{L^{1+\sigma}_t L^2_x} \ell^\alpha \| u_\nu \|_{L^{\frac{\sigma+1}{\sigma}}_t B^{\alpha}_{2,\infty}} \, .
\]
In either case, we have that the limit as $\ell_I, \ell_D\rightarrow 0$ of this term is zero, uniformly in $T$.  

Thus it remains only to treat the final term. By straightforward computations, we have that
\begin{align*}
    &\left| \int_{\T^d\times[0,T]} \nu \pa_k u_{\ell,\nu}^j \pa_k u_\nu^j \, \dee t \, \dee x  \right| \notag \\ 
    &\qquad \les \limsup_{\gamma\rightarrow 0} \left| \int_{\T^d}\int_0^T \int_{\T^d} \nu \partial_k \varphi_{\ell,\gamma}(y) \left[ u_{\ell,\gamma,\nu}(x-y) - u_{\ell,\gamma,\nu}(x) \right] \partial_k u^j_{\nu}(x) \, \dee y \, \dee t \, \dee x \right| \notag\\
    &\qquad \les \nu^{\sfrac 12} \left\| \nabla u_\nu \right\|_{L^2_{t,x}} \nu^{\sfrac 12} \ell^{-1} \left\| u_\nu \right\|_{L^2_t B^{\alpha}_{2,\infty,x}} \ell^{\alpha} \, .
\end{align*}
By the assumption that $\ell_D = \nu^L$ with $L<\frac{1}{2(1-\alpha)}$ from~\eqref{correct:lengthscale} and the uniform bounds from~\eqref{k41regularity}, we find that the above quantity tends to zero as $\nu\rightarrow 0$, uniformly in $T$.

\appendix
\section{Appendix}\label{sec:app}

\begin{proposition}\label{spheres} Let $f\in L^p(\T^d)$, and let $\sigma \in \S^{d-1}$ with $\dee \sigma$ the normalized surface measure on $\S^{d-1}$. Then for any $0 \leq \ell < 1$,
\begin{align}
    \tilde f(x) := \int_{\S^{d-1}} f(x+\ell \sigma) \, \dee \sigma  \label{eq:dumbest:bound}
\end{align}
is an integrable function of $x$ with $L^p$ norm bounded by $\| f \|_{L^p(\T^d)}$. Furthermore, if $\Psi_\gamma:(-\gamma, \gamma)\rightarrow[0,\infty)$ for $\gamma>0$ are smooth, even functions with unit $L^1$ norm defining a sequence of approximate identities $\{\Psi_{\gamma}\}_{\gamma > 0}$ and $\phi:\T^d\rightarrow \R$ belongs to $L^{\frac{p}{p-1}}(\T^d)$, we have that
\begin{align}
    \lim_{\gamma\rightarrow 0} \mathcal{I}_{\ell,\gamma,\phi}(f) &:= \lim_{\gamma\rightarrow 0} \int_{-\infty}^\infty \Psi_\gamma\left( r\right) \int_{\T^d} \phi(x) \int_{\S^{d-1}} f\left(x+(\ell+r)\sigma\right) \, \dee \sigma \, \dee x \, \dee r \notag\\
    &= \int_{\T^d} \int_{\S^{d-1}} \phi(x) f(x+\ell\sigma) \, \dee \sigma \, \dee x := \mathcal{I}_{\ell,\phi} (f) \, , \label{dumber:bound} \\
    \lim_{\ell\rightarrow 0} \mathcal{I}_{\ell,\phi} (f) &= \int_{\T^d} f(x) \phi(x) \, \dee x \, . \label{capital:D:dumb}
\end{align}
\end{proposition}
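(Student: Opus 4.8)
The plan is to prove the three claims in sequence, using duality and the mean-value/approximate-identity structure. First I would establish that the spherical-average operator $f\mapsto \tilde f$ is a contraction on $L^p$. For this I would write $\tilde f(x)=\int_{\S^{d-1}} f(x+\ell\sigma)\,\dee\sigma$ and apply Minkowski's integral inequality, since $\dee\sigma$ is a probability measure on the sphere: this gives $\|\tilde f\|_{L^p}\le \int_{\S^{d-1}}\|f(\cdot+\ell\sigma)\|_{L^p}\,\dee\sigma=\|f\|_{L^p}$, using translation invariance of Lebesgue measure on $\T^d$. The same bound shows $\tilde f$ is well-defined almost everywhere and integrable. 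A small caveat I would note is that $f\mapsto f(x+\ell\sigma)$ must be interpreted via Fubini on $\S^{d-1}\times\T^d$, which is justified once one knows the iterated integral of $|f|$ is finite.

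Next, for \eqref{dumber:bound}, the natural approach is to view $\mathcal{I}_{\ell,\gamma,\phi}(f)$ as testing the continuous-in-$r$ family $r\mapsto \int_{\T^d}\phi(x)\int_{\S^{d-1}} f(x+(\ell+r)\sigma)\,\dee\sigma\,\dee x$ against the approximate identity $\Psi_\gamma$. I would first establish by a density argument that the scalar function
\begin{equation}\notag
G(r):=\int_{\T^d}\phi(x)\int_{\S^{d-1}} f(x+(\ell+r)\sigma)\,\dee\sigma\,\dee x
\end{equation}
is continuous in $r$: for $f,\phi$ smooth this is clear by dominated convergence, and the general case follows from the contraction bound of the first part together with H\"older's inequality, which gives $|G(r)-G(r')|\le \|\phi\|_{L^{p/(p-1)}}\|\widetilde{f}_{\ell+r}-\widetilde{f}_{\ell+r'}\|_{L^p}$ and hence continuity via continuity of translation in $L^p$. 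Once $G$ is continuous, the convolution $\int \Psi_\gamma(r)G(r)\,\dee r$ converges to $G(0)=\mathcal{I}_{\ell,\phi}(f)$ as $\gamma\to0$ by the standard approximate-identity property, since $\Psi_\gamma$ are even, nonnegative, and have unit mass with shrinking support.

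Finally, for \eqref{capital:D:dumb}, I would pass $\ell\to0$ in $\mathcal{I}_{\ell,\phi}(f)=\int_{\T^d}\phi(x)\tilde f_\ell(x)\,\dee x$. The cleanest route is again duality: it suffices to show $\tilde f_\ell\to f$ strongly in $L^p$ as $\ell\to0$, after which H\"older closes the argument against $\phi\in L^{p/(p-1)}$. For smooth $f$ this convergence is immediate, and for general $f\in L^p$ I would approximate by smooth functions and use the uniform contraction bound from the first part to control the error uniformly in $\ell$, so that the limit passes through. The main obstacle throughout is purely the low regularity: all three statements are elementary for smooth $f$ and $\phi$, and the real work is the density/approximation step ensuring the operations (Fubini, continuity of $G$, and strong $L^p$ convergence of spherical means) survive at the level of $L^p$ functions. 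The contraction estimate of the first part is the key technical tool that makes every such approximation uniform and hence legitimate.
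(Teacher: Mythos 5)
Your proposal is correct and follows essentially the same strategy as the paper: the contraction bound for spherical means via the probability-measure structure of $\dee\sigma$ (the paper uses Tonelli--Fubini plus Jensen where you invoke Minkowski's integral inequality, which amounts to the same estimate), followed by density of smooth functions together with that uniform bound and H\"older to pass the limits in $\gamma$ and $\ell$. Your repackaging of the second claim through the continuity of the scalar function $G(r)$ and the one-dimensional approximate-identity property is a minor organizational variant of the paper's three-term triangle-inequality splitting, not a different method.
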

\begin{proof}
It is clear that all the claims hold for smooth functions. Considering~\eqref{eq:dumbest:bound} for arbitrary $f\in L^p(\T^d)$, we have that $f(x+\ell\sigma)$ is measurable on the product $\T^d\times\S^{d-1}$, and we can apply Tonelli's theorem to find that
$$ \| f(x+\ell\sigma) \|^p_{L^p(\T^d\times\S^{d-1})} = \int_{\S^{d-1}} \int_{\T^{d}} |f(x+\ell \sigma)|^p \, \dee x \, \dee \sigma = \| f \|^p_{L^p(\T^d)} \, . $$
Now from Fubini's theorem and Jensen's inequality, we have that the projection $\tilde f(x)$ is a measurable function of $x$ with $L^p$ norm no larger than $\|f \|_{L^p(\T^d)}$, as desired. Next, in order to prove~\eqref{dumber:bound}, let $f\in L^p(\T^d)$ and $f_n$ be a smooth approximant of $f$ in $L^p(\T^d)$.  Then we have that
\begin{align*}
    \limsup_{\ga\rightarrow 0} \left|  \mathcal{I}_{\ell,\gamma,\phi}(f) - \mathcal{I}_{\ell,\phi}(f) \right| &\leq \limsup_{\ga\rightarrow 0} \left|  \mathcal{I}_{\ell,\gamma,\phi}(f_n) - \mathcal{I}_{\ell,\phi}(f_n) \right| + \limsup_{\ga\rightarrow 0} \left|  \mathcal{I}_{\ell,\gamma,\phi}(f-f_n) \right| \notag\\
    &\qquad + \limsup_{\ga\rightarrow 0} \left| \mathcal{I}_{\ell,\phi}(f_n-f)  \right|
\end{align*}
The first term above goes to zero, and the latter two are bounded by $\| f - f_n \|_{L^p(\T^d)} \| \phi \|_{L^{\frac{p}{p-1}}(\T^d)}$ after performing a change of variables and applying H\"older's inequality. A completely analogous argument shows that~\eqref{capital:D:dumb} holds.

\end{proof}


\begin{thebibliography}{10}

\bibitem{BCZ} 
J.~Bedrossian, M.~Coti Zelati, S.~Punshon-Smith, F.~Weber. 
\newblock A Sufficient Condition for the Kolmogorov
$\sfrac 45$ Law for Stationary Martingale Solutions to the 3D Navier–Stokes Equations.
\newblock {\em Commun. Math.
Phys.} 367, 2019.

\bibitem{BDL}
E.~Bru\`e, C.~De Lellis.
\newblock Anomalous dissipation for the forced 3D Navier-Stokes equations.
\newblock {\em Communications in Mathematical Physics}, (2023). \href{https://doi.org/10.1007/s00220-022-04626-0}{https://doi.org/10.1007/s00220-022-04626-0}.

\bibitem{BCCDLS}
E.~Bru\`e, M.~Colombo, G.~Crippa, C.~De Lellis, and M.~Sorella.
\newblock Onsager critical solutions of the forced Navier-Stokes equations.
\newblock {\em arXiv:2212.08413}.

\bibitem{BDLSV17}
T.~Buckmaster, C.~De~Lellis, L.~Sz{\'{e}}kelyhidi~Jr., and V.~Vicol.
\newblock Onsager{\textquotesingle}s conjecture for admissible weak solutions.
\newblock {\em Comm. Pure Appl. Math.}, 72(2):229--274, July 2018.

\bibitem{BMNV}
T.~Buckmaster, N.~Masmoudi, M.~Novack, and V.~Vicol.
\newblock Intermittent Convex Integration for the 3D Euler
equations. 
\newblock {\em Ann. of Math. Studies}, Vol. 217, 2023.

\bibitem{CS}
A.~Cheskidov and R.~Shvydkoy.
\newblock Volumetric Theory of Intermittency in Fully Developed Turbulence.
\newblock {\em Arch Rational Mech. Anal.}, 247, 45 (2023). 

\bibitem{CCFS}
A.~Cheskidov, P.~Constantin, S.~Friedlander, and R.~Shvydkoy.
\newblock Energy conservation and {O}nsager's conjecture for the {E}uler
equations.
\newblock {\em Nonlinearity}, 21(6):1233--1252, 2008.


\bibitem{CET}
P.~Constantin, W.~E, and E.~Titi.
\newblock Onsager's conjecture on the energy conservation for solutions of
{E}uler's equation.
\newblock {\em Comm. Math. Phys.}, 165(1):207--209, 1994.

\bibitem{DK22}
C.~De Lellis and H.~ Kwon.
\newblock On non-uniqueness of H\"older continuous globally dissipative Euler flows.
\newblock {\em Anal. and PDE.}, 15(8):2003-2059, 2022.

\bibitem{DRDI}
L.~De Rosa, T.~Drivas, and M.~Inversi.
\newblock On the Support of Anomalous Dissipation Measures.
\newblock {\em Preprint available at \href{https://arxiv.org/abs/2301.09603}{https://arxiv.org/abs/2301.09603}}.

\bibitem{DeRosaIsett}
L.~De Rosa and P.~Isett.
\newblock Intermittency and lower dimensional dissipation in incompressible fluids: quantifying Landau.  
\newblock {\em Preprint available at \href{https://arxiv.org/abs/2212.08176}{https://arxiv.org/abs/2212.08176}}.

\bibitem{Drivas}
T.~Drivas. 
\newblock Self-regularization in turbulence from the Kolmogorov $\sfrac 45$-law and alignment.
\newblock {\em Phil. Trans. R. Soc. A.}, 380: 20210033, 2022.

\bibitem{DrivasNguyen}
T.~Drivas, H.Q.~Nguyen. \newblock Remarks on the Emergence of Weak Euler Solutions in the Vanishing Viscosity Limit. 
\newblock {\em Journal of Nonlinear Science}, 29, 709–721 (2019).

\bibitem{DuchonRobert}
J.~Duchon, R.~Robert.
\newblock Inertial energy dissipation for weak solutions of incompressible Euler and Navier-Stokes equations
\newblock{\em Nonlinearity}, Volume 13, Number 249, 2000.

\bibitem{Eyink94}
\newblock Energy dissipation without viscosity in ideal hydrodynamics I. Fourier analysis and local energy transfer.
\newblock {\em Physica D: Nonlinear Phenomena},
Volume 78, Issues 3–4,
1994.

\bibitem{Eyink}
G.~Eyink.
\newblock Local $\sfrac 45$-law and energy dissipation anomaly in turbulence.
\newblock{\em Nonlinearity}, Volume 16, Number 1, 2003.

\bibitem{EyinkNotes}
G.~Eyink.
\newblock ``Turbulence theory,'' course notes.
\newblock The Johns Hopkins University, 2007-2008. {\em Available at \href{https://www.ams.jhu.edu/~eyink/Turbulence/notes.html}{https://www.ams.jhu.edu/\goodtilde eyink/Turbulence/notes.html}}

\bibitem{Frisch}
U.~Frisch.
\newblock {\em Turbulence}.
\newblock Cambridge University Press, Cambridge, 1995.
\newblock The legacy of A. N. Kolmogorov.

\bibitem{GKN23b}
V.~Giri, H.~Kwon, M.~Novack.
\newblock A wavelet-inspired, $L^3$-based convex integration framework for the Euler equations.
\newblock {\em Preprint available at \href{https://arxiv.org/abs/2305.18142}{https://arxiv.org/abs/2305.18142}}.


\bibitem{GKN23}
V.~Giri, H.~Kwon, M.~Novack.
\newblock The \textsc{$L^3$-}based strong Onsager theorem.
\newblock {\em Preprint available at \href{https://arxiv.org/abs/2305.18509}{https://arxiv.org/abs/2305.18509}}.

\bibitem{GR23}
V.~Giri, R.~Radu.
\newblock The 2D Onsager conjecture: A Newton-Nash iteration.
\newblock {\em Preprint available at \href{https://arxiv.org/abs/2305.18105}{https://arxiv.org/abs/2305.18105}}.

\bibitem{HPZZ}
M.~Hofmanova, U.~Pappalettera, R.~Zhu, X.~Zhu.
\newblock Kolmogorov $\sfrac 45$ law for the forced $3D$ Navier-Stokes equations
\newblock {\em Preprint available at \href{https://arxiv.org/abs/2304.14470}{https://arxiv.org/abs/2304.14470}}.

\bibitem{Isett2018}
P.~Isett.
\newblock A proof of Onsager{\textquotesingle}s conjecture.
\newblock {\em Annals of Mathematics}, 188(3):871, 2018.

\bibitem{Is22}
P.~Isett.
\newblock Nonuniqueness and existence of continuous, globally dissipative {E}uler
  flows.
\newblock {\em Archive for Rational Mechanics and Analysis}, 244:1223--1309, 2022.

\bibitem{ISY}
K.P.~Iyer, K.R.~Sreenivasan, and P.K.~Yeung. 
\newblock Scaling exponents saturate in three-dimensional isotropic
turbulence.
\newblock {\em Physical Review Fluids}, no. 5 (2020).

\bibitem{K2}
A.~Kolmogorov.
\newblock Local structure of turbulence in an incompressible fluid at very high Reynolds number.
\newblock{\em \em Dokl. Acad. Nauk SSSR}, 30 (1941), 299-303.

\bibitem{K3}
A.~Kolmogorov.
\newblock On degeneration of isotropic turbulence in an incompressible viscous liquid.
\newblock{\em \em Dokl. Acad. Nauk SSSR}, 31 (1941), 538-540.

\bibitem{K1}
A.~Kolmogorov.
\newblock Dissipation of energy in locally isotropic turbulence.
\newblock{\em Dokl. Acad. Nauk SSSR}, 32 (1941), 16-18.


\bibitem{NV22}
M.~Novack and V.~Vicol.
\newblock An Intermittent Onsager Theorem.
\newblock\emph{Inventiones Mathematicae}, 233, 2023.

\end{thebibliography}
\end{document}